\documentclass[11pt]{article}
\usepackage{amssymb,amsfonts,amsmath,latexsym,epsf,tikz,url}
\usepackage[usenames,dvipsnames]{pstricks}
\usepackage{pstricks-add}
\usepackage{epsfig}
\usepackage{pst-grad} 
\usepackage{pst-plot} 
\usepackage[space]{grffile} 
\usepackage{etoolbox} 
\usepackage{float}
\usepackage{pgfplots}
\usepackage{mathrsfs}
\usetikzlibrary{arrows}
\makeatletter 
\patchcmd\Gread@eps{\@inputcheck#1 }{\@inputcheck"#1"\relax}{}{}
\makeatother

\newtheorem{theorem}{Theorem}[section]

\newtheorem{proposition}[theorem]{Proposition}

\newtheorem{corollary}[theorem]{Corollary}
\newtheorem{lemma}[theorem]{Lemma}
\newtheorem{remark}[theorem]{Remark}

\newtheorem{definition}[theorem]{Definition}

\newcommand{\qed}{\hfill $\square$\medskip}

\textwidth 14.5cm
\textheight 21.0cm
\oddsidemargin 0.4cm
\evensidemargin 0.4cm
\voffset -1cm

\begin{document}

\def\nt{\noindent}

\title{Total coalitions in graphs}

\author{	
Saeid Alikhani$^{1}$ 
\and
Davood Bakhshesh$^2$
\and
Hamidreza Golmohammadi$^{3}$  
}


\maketitle

\begin{center}

$^1$Department of Mathematical Sciences, Yazd University, 89195-741, Yazd, Iran\\

$^{2}$Department of Computer Science, University of Bojnord, Bojnord, Iran\\ 

$^3$Novosibirsk State University, Pirogova str. 2, Novosibirsk, 630090, Russia\\
\medskip
{\tt alikhani@yazd.ac.ir ~~ d.bakhshesh@ub.ac.ir ~~h.golmohammadi@g.nsu.ru}
\end{center}

\begin{abstract}
 Let $G$ be a graph with vertex set $V$.  Two disjoint sets $V_1, V_2\subseteq V$ are called a total coalition in $G$, if neither  $V_1$ and $V_2$ is   a total dominating set of $G$ but $V_1\cup V_2$ is a total dominating set.  A total coalition partition of $G$ is a vertex partition $\pi=\{V_1,V_2,\ldots, V_k\}$ such that no set  of $\pi$ is  a total dominating set but  each set $V_i\in \pi$ forms a total coalition with another set $V_j\in \pi$.  The maximum cardinality of a total coalition partition of $G$ is called the total coalition number of $G$, denoted by $TC(G)$.   
In this paper,  we initiate the study of the  total coalition in graphs and its properties.
\end{abstract}

\noindent{\bf Keywords:}  Total dominating set;  Coalition; Total coalition, Tree.

\medskip
\noindent{\bf AMS Subj.\ Class.:}  05C60.


\section{Introduction} 
Let $G = (V,E)$ be  a simple graph with vertex set $V$ and edge set $E$.  The {\em open neighborhood} of a vertex
$v \in V$ is the set $N(v)$ = $\{u | \{u,v\} \in E\}$, and its {\em closed neighborhood} is the set
$N[v]$ = $N(v) \cup \{v\}$. Each vertex  of $ N(v)$ is called a {\em neighbor} of~$v$, and the cardinality of $|N(v)|$
is called the {\em degree} of $v$, denoted by $\deg(v)$.  A vertex~$v$ of degree 1  is called a  {\em leaf}, and its neighbor is called a {\em support vertex}. In a graph $G$ of order $n$, a {\em full vertex} is a vertex of degree $n-1$, and an {\em isolated vertex} is a  vertex of degree $0$. We call a graph  that contains no isolated vertex  an {\em isolate-free} graph.  
The minimum degree and the maximum degree of $G$ is denoted by $\delta(G)$ and  $\Delta(G)$, respectively.   A  set $S\subseteq V$ is called a {\em singleton} if  $|S| = 1$, and is called a {\em non-singleton},  if $|V_i|\ge 2$.
A vertex $v$  of a graph $G$ is called a {\em simplicial} vertex if the graph induced by $N[v]$ is a complete graph.

A set $S\subseteq V$ is called a {\em dominating set} of  $G$, if every vertex of  $V \setminus S$
has at least one  neighbor  in $S$. 
A set $S \subseteq V$ is called a  {\em total dominating set} of $G$ if every vertex of~$V$  has at least one neighbor in $S$. The minimum cardinality of a  total dominating set in~$G$ is called the {\em total domination number} of $G$, denoted by $\gamma_{t}(G)$. A total dominating set with the  cardinality $\gamma_t(G)$ is called a $\gamma_{t}$-set. 
Cockayne, Dawes, and Hedetniemi  introduced the total domination in graphs \cite{3}. To study an overview of research on domination, we refer the reader to  the   books  \cite{7,8,9}. A {\em domatic partition} (or {\em total domatic partition}) is a partition
of the vertex set into dominating sets (or total dominating sets). The maximum cardinality of a domatic partition (or total domatic partition) is called the {\em domatic number} (or {\em total domatic number}), denoted by 
$d(G)$ (or $d_{t}(G)$). The domatic number of a graph was introduced by  Cockayne and
 Hedetniemi \cite{2} and the total domatic number was introduced by Cockayne,
Dawes and Hedetniemi in~\cite{3}. For more details on the domatic number and total domatic number refer to e.g., \cite{11,12,13,14}.

In 2020,  Haynes et al. introduced the  coalitions in graphs  \cite{4}.  For a graph $G$ with vertex set $V$, two sets $V_1,V_2\subseteq V$ form a {\em coalition}, if neither $V_1$ nor $V_2$ is a  dominating set but $V_1\cup V_2$ is a  dominating set in $G$. A {\em coalition partition} in $G$ is a vertex partition $\Psi=\{V_1,\ldots, V_k\}$ such that each set $V_i\in\Psi$  is either a dominating set with $|V_i|=1$,  or there exists another set $V_j\in\Psi$ that forms a coalition with $V_i$.  The maximum cardinality of a coalition partition in $G$ is called the {\em coalition number} of $G$, denoted by $C(G)$. 
Corresponding to any coalition partition $\Psi=\{V_1,V_2,\ldots, V_k\}$ in  a graph $G$, a  {\em coalition graph} $CG(G, \Psi)$ is  associated in which there is a one-to-one correspondence between the   vertices of $CG(G,\Psi)$  and the sets $V_1, V_2,...,V_k$ of $\Psi$, 
and two vertices of  $CG(G,\Psi)$  are adjacent if and only if their corresponding
sets in $\Psi$ form a coalition. 

In  \cite{4}, Haynes et al. provided some upper bounds and lower bounds on the coalition number of a graph. Moreover,  they determined the coalition number of paths and cycles.  In \cite{6}, Haynes et al., provided some other upper bounds on the coalition number of graphs in terms of minimum degree or maximum degree. In \cite{401}, Haynes et al. initiated  the study of the coalition graphs. They proved that any graph is a coalition graph. In   \cite{5}, they characterized the coalition graphs of paths, cycles,  and trees.    
Recently,  Bakhshesh, Henning and Pradhan in \cite{Davood} characterized all graphs $G$ of order $n$ with $\delta(G)\leq 1$ and $C(G)=n$. Moreover, they   characterized all trees $T$ of order $n$ with $C(T)=n-1$.  

As stated in Section 4 of \cite{4} which contains open problems and areas for future research, one can consider total dominating $c$-partition. Motivated by this suggestion,  in this paper,   we introduce the total coalition and total coalition partition.  In the next section, after introducing  the total coalitions,   we establish bounds on the total coalition number.  Moreover, we study the  total coalition number of  graphs $G$ with $\delta(G)=1$ and $\delta(G)=2$ in Sections 3 and 4, respectively. Finally, we conclude the paper in Section 5. 
\section{Introduction to total coalition}
We first state the definition of the total coalitions and then we establish bounds on the total coalition number.
 \begin{definition}[Total coalition]
For a graph $G$ with vertex set $V$,  two sets $V_1,V_2\subseteq V$ form a {total coalition}, if neither $V_1$ nor $V_2$ is a  total dominating set but $V_1\cup V_2$ is a  total dominating set in $G$. 
\end{definition}
Now, we define a total coalition partition in a graph $G$. 
\begin{definition}[ Total coalition partition]
A {total coalition partition} ($tc$-partition) in $G$ is a vertex partition $\pi=\{V_1,\ldots, V_k\}$ such that no set  of $\pi$ is  a total dominating set but  each set $V_i\in \pi$ forms a total coalition with another set $V_j\in \pi$.  The maximum cardinality of a total coalition partition of $G$ is called the {total coalition number} of $G$, denoted by $TC(G)$.  A $tc$-partition of a graph $G$  with the cardinality $TC(G)$ is denoted by $TC(G)$-partition. 
\end{definition}

In the following, we prove that every isolate-free graph $G$ has a  $tc$-partition.
\begin{theorem}\label{1}
 Every isolate-free graph $G$ has a  $tc$-partition.
 \end{theorem}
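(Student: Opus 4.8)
\proof Since a graph with an isolated vertex admits no total dominating set, the claim is to be read for non-isolated $G$; I assume this throughout, so that $V(G)$ is itself a total dominating set. The plan is to split into two cases according to whether $V(G)$ can be partitioned into two sets, both of which fail to be total dominating.

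In the first case such a partition $\{V_1,V_2\}$ is already a $tc$-partition: $V_1\cup V_2=V(G)$ is total dominating while $V_1$ and $V_2$ are not, so they form a total coalition. This case occurs exactly when $G$ has two distinct vertices $x,y$ with $N(x)\cap N(y)=\emptyset$ — take $V_1=N(y)$ and $V_2=V(G)\setminus N(y)$, so that $x$ witnesses that $V_1$ is not total dominating and $y$ witnesses it for $V_2$ — and it includes in particular every $G$ with $\delta(G)=1$, since a vertex of degree one and its neighbour have disjoint open neighbourhoods. Hence the only case left to treat is $\delta(G)\ge 2$ with every two vertices of $G$ having a common neighbour; here a two-class $tc$-partition cannot exist, so at least three classes are required.

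In that case I would begin with the partition $\pi$ of $V(G)$ into singletons. Every class of $\pi$ fails to be a total dominating set (a single vertex $v$ never totally dominates, as $N(v)$ omits $v$), so $\pi$ can fail to be a $tc$-partition only because some singleton $\{v\}$ has no coalition partner, i.e.\ $N(v)\cup N(u)\neq V(G)$ for all $u$; call such a $v$ \emph{partnerless} and let $P$ denote the set of partnerless vertices. If $|P|\le|V(G)\setminus P|$, I would match each $v\in P$ to a distinct $u_v\notin P$ and replace the singletons $\{v\}$ and $\{u_v\}$ by the block $\{v,u_v\}$: this block is still not total dominating (because $v\in P$), and if $w$ is a partner of $u_v$ (so $N(u_v)\cup N(w)=V(G)$) then $\{v,u_v\}$ together with the block now containing $w$ — which is distinct from $\{v,u_v\}$ and is not total dominating — is a total coalition, while a short case check shows every other class still retains a partner. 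When $P$ is too large for such a matching — for instance when $\gamma_t(G)\ge 3$, so that $P=V(G)$ as no two vertices totally dominate — one instead merges partnerless vertices with one another (guided by a minimal total dominating set and its private neighbours) into blocks that remain non-total-dominating yet whose pairwise unions become total dominating.

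I expect the main obstacle to be precisely this last situation. When $\delta(G)\ge 2$, every two vertices have a common neighbour, and $\gamma_t(G)$ is large, the all-singletons partition has no ``good'' class available to absorb the ``bad'' ones, and one must show it can still always be coarsened into a partition into non-total-dominating blocks in which every block has a total-coalition partner. Reconciling the two opposing requirements — that each block fail to totally dominate, yet that each block together with some other block totally dominate — is the delicate point, and is where the argument, modelled on the proof of Theorem~2 in \cite{4}, needs its real work.\qed
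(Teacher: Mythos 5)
Your proposal does not close: you explicitly leave unproved the case where $\delta(G)\ge 2$, every two vertices have a common neighbour, and the set $P$ of ``partnerless'' vertices is too large to be matched into $V(G)\setminus P$ (e.g.\ when $\gamma_t(G)\ge 3$, so that no two singletons form a coalition and $P=V(G)$; but also when $\gamma_t(G)=2$ and $|P|>|V(G)\setminus P|$). The sentence ``one instead merges partnerless vertices with one another (guided by a minimal total dominating set and its private neighbours) into blocks that remain non-total-dominating yet whose pairwise unions become total dominating'' is precisely the statement of the theorem in that case, not a proof of it, and you acknowledge as much. So as written this is a correct treatment of the easy cases plus a plan for the hard one; the hard one is the theorem.

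The missing idea is the one the paper's proof is built on: if $D$ is a \emph{minimal} total dominating set and $D=A\cup B$ is any partition into two nonempty parts, then neither $A$ nor $B$ is a total dominating set (by minimality) while $A\cup B=D$ is, so $A$ and $B$ automatically form a total coalition \emph{with each other}. This turns the problem top-down: take a total domatic partition $\{V_1,\dots,V_k\}$ with $k=d_t(G)\ge 1$ (which exists since $V(G)$ itself is a TDS for non-isolated $G$), shrink $V_1,\dots,V_{k-1}$ to minimal TDSs (dumping the excess into $V_k$), split each into two halves that partner each other, and then deal only with the single leftover piece $W_k=V_k\setminus V_k'$: either it has a partner already, or it is absorbed into one half of the split of $V_k'$, whose union with the other half is still $V_k'\subseteq V_k$... in fact $V_k$, a TDS. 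This avoids entirely the combinatorial difficulty your bottom-up singleton-merging runs into, because each block's partner is manufactured alongside it rather than searched for afterwards. If you want to salvage your approach, the natural fix in your last case is exactly this: replace the all-singletons start by first extracting a minimal total dominating set and splitting it in two, then attaching the remaining vertices to one of the two halves (or to further split minimal TDSs) so that every block keeps a witness vertex it fails to totally dominate.
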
 
\begin{proof}
Let ${\cal D}=\{V_1, V_2,\ldots, V_k \}$ be a total domatic partition of $G$ with $k=d_t(G)$. It is clear that $|V_i|\geq 2$. In the following, we show how to construct a $tc$-partition $\pi$ of $G$.  For any integer  $1\leq i\leq k-1$, if $V_i$ is not a minimal total dominating set, then there is a set $V'_i\subset V_i$ which is a minimal total dominating set. So, we replace $V_i$ by $V'_i$ in $\cal D$, and  add $V_i\setminus V'_i$ to $V_k$.    Hence, we may assume that all sets $V_i\in {\cal D}$ with $1\leq i\leq k-1$ are minimal total dominating sets of $G$. Now, we construct a $tc$-partition $\pi$ for $G$. We start with $\pi=\emptyset$. Now, we  partition every set $V_i\in {\cal D}$ with  $1\leq i\leq k-1$ into  two nonempty sets $V^1_i$ and $V^2_i$, and  then, we add  $V^1_i$ and $V^2_i$ to $\pi$. It is clear that neither  $V^1_i$ nor $V^2_i$ is a total dominating set but $V^1_i\cup V^2_i$  is a total dominating set. Then, clearly $|\pi|\geq 2k-2$. Now, consider the set $V_k\in {\cal D}$. If $V_k$ is a minimal total dominating set, then  we partition $V_k$ into two nonempty sets $V^1_k$ and $V^2_k$, and  then, we add  $V^1_k$ and $V^2_k$ to $\pi$. Clearly,  neither  $V^1_k$ nor $V^2_k$ is a total dominating set but $V^1_k\cup V^2_k$ is a total dominating set. So, this creates a $tc$-partition $\pi$ in $G$ with $|\pi|=2k$. Now, suppose that $V_k$ is not a minimal total dominating set. Then, there is a set $U_k\subset V_k$ which is a minimal total dominating set. Now, we partition $U_k$ into two sets $U^1_k$ and $U^2_k$, and add them to $\pi$.  Clearly, neither  $U^1_k$ nor $U^2_k$ is a total dominating set but $U^1_k\cup U^2_k$ is a total dominating set. Now, consider the set $W_k:=V_k\setminus U_k$.  Since $\cal D$ is a domatic partition of $G$ with the maximum cardinality $d_t(G)=k$, the set $W_k$ is not a total dominating set. Now, if $W_k$ forms a total coalition with  one of the sets of $\pi$, then we add $W_k$ to $\pi$, and therefore $|\pi|=2k+1$.  If $W_k$ does not form a total coalition with any set of $\pi$, then we remove $U^2_k$ from $\pi$ and add $U^2_k\cup W_k$ to $\pi$, and therefore $|\pi|=2k+1$. So, this completes the construction of $\pi$. \qed
\end{proof} 

From  the proof of Theorem \ref{1}, we have  the following results.
\begin{corollary}
\label{cordt}
For any isolate-free graph $G$, it holds that $TC(G)\geq 2d_t(G)$.
\end{corollary}
Assume that a graph $G$ of order $n$ with vertex set $V=\{v_1, v_2,\ldots, v_n\}$ has a full vertex~$v_k$. Since $v_k$ is adjacent to all vertices, it is obvious that $\pi=\left\{\{v_1\}, \{v_2\}, \ldots, \{v_n\}\right\}$ is a $tc$-partition. Then, $TC(G)=n$. So, the following result holds.
\begin{proposition} 
\label{proptcgn}
For any graph $G $  of order $n$ with at least one full vertex, it holds that $TC(G)=n$.
\end{proposition} 
From the proof of Theorem \ref{1} and by Proposition \ref{proptcgn}, it is not hard to conclude the following result. 
\begin{corollary}\label{cor1}
If $G$ is an isolate-free graph of order $n$, then $2\leq TC(G) \leq n$.
\end{corollary}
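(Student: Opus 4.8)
The plan is to derive both bounds from results already established. For the lower bound $TC(G)\ge 2$: by Theorem~\ref{1} every graph $G$ has a $tc$-partition $\pi=\{V_1,\dots,V_k\}$, and by definition of a $tc$-partition each set $V_i$ is \emph{not} a total dominating set but forms a total coalition with some $V_j\in\pi$. Since a single class cannot be a coalition partner of itself (the two sets in a coalition are disjoint, and a coalition requires that \emph{neither} set alone is a total dominating set while their union is), any $tc$-partition must contain at least two classes. Hence $k\ge 2$, and since $TC(G)$ is the maximum such $k$ we get $TC(G)\ge 2$. One should double-check the degenerate situation where $G$ itself is edgeless or tiny: the notion of total domination already presupposes $G$ has no isolated vertex for $\gamma_t(G)$ to be defined, but the statement as written claims $2\le TC(G)$ for \emph{every} graph of order $n$; I would either note that $n\ge 2$ is implicitly assumed (so that a total dominating set, namely $V$ itself when $G$ is non-isolated, exists) or restrict to non-isolated $G$ as elsewhere in the paper.

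For the upper bound $TC(G)\le n$: a $tc$-partition $\pi=\{V_1,\dots,V_k\}$ is in particular a partition of the $n$-element vertex set $V$ into \emph{nonempty} classes, so the number of classes cannot exceed $|V|=n$. Taking the maximum over all $tc$-partitions yields $TC(G)\le n$. This is the routine pigeonhole observation; no graph-theoretic content is needed here.

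I expect essentially no obstacle: both inequalities are immediate once Theorem~\ref{1} guarantees existence of a $tc$-partition. The only point requiring a word of care is making the lower bound argument airtight — specifically, observing that the defining condition ``$V_i$ is not a total dominating set but forms a coalition with another set $V_j$'' forces $k\ge 2$, because if $k=1$ the single class would have to be its own partner, which is impossible since its union with itself is just itself and it is assumed not to be a total dominating set. So the proof is two short sentences plus this remark. I would write it as: ``By Theorem~\ref{1}, $G$ has a $tc$-partition, and any $tc$-partition has at least two classes and at most $n$ classes; hence $2\le TC(G)\le n$.''
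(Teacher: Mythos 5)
Your proof is correct and matches the paper's (unstated) reasoning: the paper simply derives the corollary from Theorem~\ref{1} together with the trivial facts that a $tc$-partition needs at least two classes and a partition of $V$ has at most $n$ classes. Your side remark is also well taken: the corollary implicitly assumes $G$ is non-isolated (so that total dominating sets, and hence Theorem~\ref{1}, make sense), a hypothesis the paper omits here but uses elsewhere.
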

It is obvious that the total coalition number of the path  $P_2$ is equal to two, and the total  coalition number of the complete graph  $K_n$ is equal to $n$. So, the bounds presented in   Corollary~\ref{cor1} are sharp.

Now, we  need the following theorem.  
\begin{theorem} {\rm \cite{14}}\label{ldet}
If $G$ is a graph of  order $n$ and minimum
degree $\delta(G)$, then 
$d_{t}(G)\ge\lfloor\frac{n}{n-\delta(G)+1}\rfloor$.
\end{theorem}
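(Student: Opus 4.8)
The final statement to prove is Theorem~\ref{ldet}, the Zelinka lower bound $d_t(G)\ge\lfloor n/(n-\delta(G)+1)\rfloor$, which is quoted from~\cite{14}. Since it is attributed to the literature, the natural plan is to reproduce Zelinka's counting argument rather than invent something new.

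\medskip

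\noindent\textbf{Plan.} First I would set $\delta=\delta(G)$ and $k=\lfloor n/(n-\delta+1)\rfloor$, and note the trivial case: if $\delta=0$ then $k=\lfloor n/(n+1)\rfloor=0$ and there is nothing to prove, so assume $\delta\ge 1$. The key observation is a purely set-theoretic one about complements of neighborhoods: a set $S\subseteq V$ is a total dominating set of $G$ if and only if $V\setminus S$ misses the open neighborhood of no vertex, equivalently for every $v$ we have $N(v)\not\subseteq V\setminus S$, equivalently $S$ is a transversal (hitting set) of the family $\{N(v):v\in V\}$. Dually, $S$ \emph{fails} to be a total dominating set precisely when some neighborhood $N(v)$ is entirely contained in the complement $V\setminus S$.

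\medskip

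\noindent\textbf{Main step.} The heart of the proof is to partition $V$ greedily into $k$ parts $V_1,\dots,V_k$, each of which is a total dominating set. I would build them one vertex at a time, or alternatively argue by a deficiency/counting argument: any set that is \emph{not} a total dominating set is ``blocked'' by some vertex whose entire neighborhood (of size $\ge\delta$) lies in the complement. Because each $N(v)$ has at least $\delta$ vertices, the complement of a non-total-dominating set must contain at least $\delta$ vertices, so every part we are forced to leave incomplete ``costs'' at least $\delta$ vertices outside it, i.e.\ has size at most $n-\delta$; turning this around, if each of the $k$ prospective parts has size at least $n-\delta+1$ then none of them can be blocked and each is total dominating. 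Since $k(n-\delta+1)\le n$ by the choice of $k=\lfloor n/(n-\delta+1)\rfloor$, we can indeed carve $V$ into $k$ disjoint sets each of size at least $n-\delta+1$ (distribute the $n$ vertices into $k$ groups as evenly as possible; the smallest group still has size $\ge\lfloor n/k\rfloor\ge n-\delta+1$). Hence each part is a total dominating set, giving a total domatic partition of order $k$ and therefore $d_t(G)\ge k$.

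\medskip

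\noindent\textbf{Expected obstacle.} The only delicate point is the inequality bookkeeping: one must check carefully that $k=\lfloor n/(n-\delta+1)\rfloor$ really forces $\lfloor n/k\rfloor\ge n-\delta+1$, i.e.\ that the most even partition of $n$ vertices into $k$ blocks has minimum block size at least $n-\delta+1$. This follows from $k(n-\delta+1)\le n$, which is immediate from the definition of the floor, but the direction of the inequalities (and the edge cases $k=0$ and $k=1$, where $V$ itself is a total dominating set since $G$ has no isolated vertex when $\delta\ge1$) should be spelled out. Everything else is the standard equivalence ``total dominating set $=$ transversal of the neighborhood hypergraph,'' which makes the size bound transparent.
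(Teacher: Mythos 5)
Your proof is correct: the observation that any non-total-dominating set $S$ has some open neighborhood $N(v)\subseteq V\setminus S$, hence $|S|\le n-\delta(G)$, together with the arithmetic fact that $k=\lfloor n/(n-\delta(G)+1)\rfloor$ allows $V$ to be split into $k$ disjoint parts each of size at least $n-\delta(G)+1$, is exactly Zelinka's standard argument, and your bookkeeping (including the vacuous case $\delta(G)=0$) checks out. The paper itself states this theorem as a citation to \cite{14} and gives no proof, so there is nothing to compare against beyond noting that your reconstruction is the intended one.
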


By Corollary \ref{cordt} and  Theorem \ref{ldet}, we have the following corollary.

\begin{corollary}
If $G$ is  an isolate-free graph of  order $n$, then 
$TC(G)\ge2\lfloor\frac{n}{n-\delta(G)+1}\rfloor$ and this bound is sharp for complete graph $K_n$. 
  \end{corollary}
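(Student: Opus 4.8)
The plan is to combine the two results that immediately precede this corollary. Theorem~\ref{ldet} gives $d_{t}(G)\ge\lfloor\frac{n}{n-\delta(G)+1}\rfloor$ for any graph $G$ of order $n$ with minimum degree $\delta(G)$, and the discussion following Corollary~\ref{cor1} establishes the chain $TC(G)\ge 2\,d_{t}(G)$, which itself comes out of the construction in the proof of Theorem~\ref{1} (split each of the $d_t(G)$ total dominating sets into two non-total-dominating halves that form a total coalition). So the whole argument is essentially one substitution.

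Concretely, I would first observe that since $G$ is non-isolated, $d_{t}(G)\ge 1$, so Theorem~\ref{ldet} applies and gives a meaningful bound. Then I would write $TC(G)\ge 2\,d_{t}(G)\ge 2\lfloor\frac{n}{n-\delta(G)+1}\rfloor$, citing the proof of Theorem~\ref{1} for the first inequality and Theorem~\ref{ldet} for the second. That is the entire derivation; there is no real obstacle here, since both ingredients are already in hand.

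For sharpness, I would exhibit $K_n$: here $\delta(K_n)=n-1$, so $n-\delta(K_n)+1=2$ and $\lfloor\frac{n}{2}\rfloor$ is the value inside, giving a lower bound of $2\lfloor\frac{n}{2}\rfloor$, which equals $n$ when $n$ is even and $n-1$ when $n$ is odd. We already know $TC(K_n)=n$ from the remark after Corollary~\ref{cor1}, so for even $n$ the bound is attained exactly. (If one wants literal sharpness for all $n$, restricting the sharpness claim to even $n$, or noting the bound is tight up to an additive constant, is the honest statement; I would phrase it as ``sharp for $K_n$ with $n$ even'' or simply follow the paper's looser wording and point to $K_n$.) The only point requiring any care is this parity issue in the sharpness claim — the main inequality is immediate.

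A clean writeup: \emph{Proof.} Since $G$ is non-isolated, $d_{t}(G)$ is defined and at least $1$. By the proof of Theorem~\ref{1}, any total domatic partition of order $d_{t}(G)$ can be refined to a $tc$-partition of order at least $2\,d_{t}(G)$, so $TC(G)\ge 2\,d_{t}(G)$. Combining this with Theorem~\ref{ldet} yields $TC(G)\ge 2\lfloor\frac{n}{n-\delta(G)+1}\rfloor$. For the complete graph $K_n$ with $n$ even we have $\delta(K_n)=n-1$ and hence $2\lfloor\frac{n}{n-\delta(K_n)+1}\rfloor=2\lfloor\frac{n}{2}\rfloor=n=TC(K_n)$, so the bound is sharp. \qed
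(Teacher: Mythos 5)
Your proof is correct and follows exactly the route the paper intends: the corollary is obtained by chaining $TC(G)\ge 2\,d_{t}(G)$ (from the construction in Theorem~\ref{1}, as stated in the remark after Corollary~\ref{cor1}) with Zelinka's bound in Theorem~\ref{ldet}. Your observation that the sharpness claim for $K_n$ really only holds for even $n$ (since $2\lfloor n/2\rfloor=n-1<n=TC(K_n)$ when $n$ is odd) is a fair and correct refinement of the paper's looser wording.
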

Now, we prove the following result.
\begin{theorem} \label{2.9}
	If $G$ is an isolate-free  graph with no full vertex, then 
	$ TC(G)\ge \delta(G)+1$ and the bound is sharp. 
\end{theorem}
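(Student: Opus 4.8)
The plan is to build a $tc$-partition of $G$ with at least $\delta(G)+1$ parts by starting from a vertex of minimum degree and using its neighborhood. Fix a vertex $u$ with $\deg(u)=\delta(G)$, and write $N(u)=\{x_1,x_2,\dots,x_\delta\}$. The natural candidate partition is $\pi=\{\{x_1\},\{x_2\},\dots,\{x_\delta\},V_0\}$, where $V_0=V(G)\setminus N(u)$ (note $u\in V_0$, so $V_0\neq\emptyset$, and $\delta\ge 1$ since $G$ is non-isolated, so this has $\delta+1$ blocks). I would first check that no block of $\pi$ is a total dominating set: each singleton $\{x_i\}$ fails because $G$ has no full vertex (a total dominating set of size one would require a full vertex), and $V_0$ fails because $N_G(V_0)$ misses $u$ — indeed every neighbor of $u$ lies in $N(u)$, which is disjoint from $V_0$, so $u$ has no neighbor in $V_0$.

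Next I would verify that every block forms a total coalition with some other block. For each singleton $\{x_i\}$: the set $\{x_i\}\cup V_0$ contains $u$ and all of $N(u)$ except possibly... wait — actually $\{x_i\}\cup V_0 = V(G)\setminus(N(u)\setminus\{x_i\})$; I need this to be a total dominating set. Every vertex $v$ of $G$ needs a neighbor in this set. The only vertices not automatically handled are those whose entire neighborhood sits inside $N(u)\setminus\{x_i\}$. To kill this case cleanly, rather than using $V_0$ as a coalition partner for the $x_i$'s, I would instead pair each $\{x_i\}$ with a block I can control — so a better choice is to put $u$ into its own logic: take $\pi=\{\{x_1\},\dots,\{x_{\delta-1}\},\{x_\delta,u\}\text{-type block},\dots\}$. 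Let me restructure: the cleanest approach is to show $\{x_i\}$ and $\{x_j\}$ together with the rest can be arranged, or more simply, to observe that $V_0\cup\{x_i\}$ total-dominates $G$ for every $i$ provided $\delta(G)\ge 2$, handling $\delta(G)=1$ (where the claim $TC(G)\ge 2$ is immediate from Corollary~\ref{cor1}) separately. When $\delta(G)\ge 2$, a vertex $v$ with $N(v)\subseteq N(u)\setminus\{x_i\}$ would have to be adjacent to some $x_j$ with $j\neq i$, and since $|N(u)|=\delta(G)\ge 2$ there is such an $x_j\in V_0\cup\{x_i\}$... no, $x_j\notin V_0$. The real fix: partner $\{x_i\}$ with $\{x_j\}\cup V_0$ is not a block of $\pi$ either.

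So the honest structure is this: use $\pi=\{\{x_1\},\dots,\{x_\delta\}, V_0\}$ and prove $\{x_i\}$ forms a total coalition with $V_0$, i.e. $N_G(\{x_i\}\cup V_0)=V(G)$. A vertex $v$ is un-dominated only if $N(v)\cap(\{x_i\}\cup V_0)=\emptyset$, equivalently $N(v)\subseteq N(u)\setminus\{x_i\}$. If no such $v$ exists we are done; if some such $v$ exists, then $v$'s neighbors all lie among $\{x_1,\dots,x_\delta\}\setminus\{x_i\}$, and here I would need an extra argument — possibly relabeling, or invoking that we may choose $u$ and the labeling so that $x_i$ is \emph{not} the unique... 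Actually the standard trick (this is exactly the pattern in the coalition literature) is: if $V_0$ does not form a coalition with $\{x_i\}$, then $\{x_i\}$ must still find a partner, and one shows $\{x_i\}$ together with $\{x_j\}$ for an appropriate $j$ works, or one modifies $V_0$. The main obstacle is precisely this edge case where some vertex's neighborhood is buried inside $N(u)$; I expect the resolution to require choosing $u$ to be simplicial-free or handling it by a small case analysis on whether such a ``trapped'' vertex exists, and if it does, merging it appropriately. I would therefore present the proof as: (1) dispose of $\delta(G)=1$ via Corollary~\ref{cor1}; (2) for $\delta(G)\ge 2$ set up $\pi$ as above; (3) show each block is not a total dominating set; (4) show $V_0$ coalitions with each $\{x_i\}$ for which no trapped vertex exists, and for the remaining $x_i$'s observe they pairwise coalition with each other or adjust one block — the final bookkeeping being the step most likely to need care.
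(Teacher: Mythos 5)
Your construction is exactly the one the paper uses (your $V_0=V(G)\setminus N(u)$ is the paper's $V_{k+1}=(V(G)\setminus N[v])\cup\{v\}$), and your verification that no block is a total dominating set is correct. But the proposal stops short of a proof at precisely the step you flag as needing care: you never establish that $\{x_i\}\cup V_0$ is a total dominating set, and the fallback strategies you sketch (pairing $\{x_i\}$ with $\{x_j\}$, relabeling, merging blocks) are not developed and would not obviously work --- two singletons almost never union to a total dominating set. This is a genuine gap, because the whole theorem rests on that coalition claim.

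The gap closes with a one-line degree count that makes your ``trapped vertex'' case vacuous. A vertex $w$ fails to be totally dominated by $\{x_i\}\cup V_0=V(G)\setminus(N(u)\setminus\{x_i\})$ only if $N(w)\subseteq N(u)\setminus\{x_i\}$, which forces $\deg(w)\le |N(u)|-1=\delta(G)-1<\delta(G)$, a contradiction. So every $\{x_i\}$ forms a total coalition with $V_0$, for every $i$, with no case analysis; in particular your separate treatment of $\delta(G)=1$ is unnecessary (there $\{x_1\}\cup V_0=V(G)$, a total dominating set since $G$ has no isolated vertex). The paper reaches the same conclusion slightly more verbosely, by counting the neighbors of a vertex $w\notin N[u]$ inside and outside $N(u)$: if all $k=\delta(G)$ of them lie in $N(u)$ then $N(w)=N(u)$ and $w$ is adjacent to $x_i$ itself; otherwise $w$ already has a neighbor in $V_0$. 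Either formulation finishes your argument; without one of them the proposal is incomplete.
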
 
\begin{proof} 
Let  $v$ be a vertex of $G$ with the minimum degree $\delta(G)$. Let $k=\delta(G)$, and let $N(v) = \{v_1, v_2,...,v_k\}$. Since $G$ has no full vertex, $V(G)\setminus N[v] \neq\emptyset$. For any integer $1\leq i\leq k$, let $V_i=\{v_i\}$, and let $U=\left(V(G)\setminus N[v]\right)\cup \{v\}$. Let $\pi=\{V_1,V_2,\ldots, V_k, U\}$. It is clear that $\pi$ is a vertex partition of $G$. We claim  that~$\pi$ is a $tc$-partition of $G$. Clearly, all sets $V_i$ are singleton, and therefore, they are not total dominating sets. Since the vertex $v$ has no neighbor in $U$, the set $U$ is not total dominating set. Now, we show that any set of $\pi$ is in total coalition with another set of $\pi$. Consider the set~$U$ and an arbitrary set $V_i$. We show that $S:=V_i\cup U$ is a total dominating set, and therefore, $V_i$ forms a total coalition with $U$. Let $w\in V(G)$. If $w\not\in S$, clearly $w\in N(v)$. Then, $w$ has a neighbor in $S$. Now, suppose that $w\in S$. If $w=v_i$ or $w=v$, since $v_i\in N(v)$, then $w$ has a neighbor in $S$. Now, suppose that $w\in V(G)\setminus N[v]$. If $w$ has a neighbor in $V(G)\setminus N[v]$, then we are done. If $w$ has no neighbor in $V(G)\setminus N[v]$, since $\deg(w)\geq \deg(v)=\delta(G)$, $w$ should be  adjacent to all vertices in $N(v)$. Therefore,~$w$ is adjacent to $v_i$. Hence, $w$ has a neighbor in $S$. This proves the claim. Since $|\pi|=k+1=\delta(G)+1$, it holds that $TC(G)\geq \delta(G)+1$. For the sharpness of the inequality, consider the cycle $C_5$ (see Theorem~\ref{cycle}). \qed
   \end{proof}


Now, we finalize this section with  the following result. 

\begin{theorem}\label{tatmost}
 Let $G$ be a graph with maximum degree $\Delta(G)$,
and let $\pi$ be a $TC(G)$-partition. If $X \in \pi$, then $X$ is in at most $\Delta(G)$  total coalitions.
	    
\end{theorem}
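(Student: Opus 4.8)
The plan is to bound the number of total coalition partners of a fixed set $X \in \pi$ by finding, for each such partner, a ``witness'' vertex that certifies the partnership, and then showing that there are only $\Delta(G)+1$ possible witnesses. Suppose $X$ forms a total coalition with sets $Y_1, Y_2, \ldots, Y_m$ in $\pi$. Since $X$ is not a total dominating set of $G$, there is a vertex $u \in V(G)$ with $N(u) \cap X = \emptyset$; fix such a $u$. For each $j$, the union $X \cup Y_j$ is a total dominating set, so in particular $u$ must have a neighbor in $X \cup Y_j$; since $u$ has no neighbor in $X$, it must have a neighbor in $Y_j$. Thus each $Y_j$ contains at least one vertex of $N(u)$.

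The sets $Y_1, \ldots, Y_m$ are pairwise disjoint (they are distinct blocks of the partition $\pi$), and each meets $N(u)$, so $m \le |N(u)| = \deg(u) \le \Delta(G)$. This already gives the bound $\Delta(G)$; to get the stated $\Delta(G)+1$ I would note that the argument above pins the witness vertex $u$ down to a single choice, but in general there could be several vertices undominated by $X$, and a more careful accounting is needed. The intended slack of $+1$ presumably comes from allowing $X$ itself to be a ``degenerate'' partner via a singleton total-dominating consideration, or — more likely, mirroring Theorem~4 of \cite{4} — from the fact that one of the $Y_j$ may coincide with the block containing $u$, so that $u$'s own block is counted among the partners without $u \in N(u)$ ever being required. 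In the version for ordinary coalitions this is exactly the source of the extra unit. I would carry out the counting by fixing a vertex $u$ with $N(u)\cap X=\emptyset$, observing each partner $Y_j\neq (\text{block of }u)$ meets $N(u)$ in a vertex, giving at most $\deg(u)\le\Delta(G)$ such partners, and then adding $1$ for the possible partnership with the block of $u$ itself.

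The main obstacle is the bookkeeping around which vertex $u$ to select and whether the block containing $u$ can legitimately be a partner of $X$: I must verify that when $Y_j$ is the block containing $u$, no double-counting occurs and that the $\Delta(G)+1$ is genuinely achievable in the bound rather than an artifact. Concretely, I would argue: let $u$ be a vertex with no neighbor in $X$, let $V_u\in\pi$ be the block containing $u$. Every partner $Y_j$ other than $V_u$ must contain a neighbor of $u$, and distinct partners contain distinct such neighbors, so there are at most $\deg(u)$ of them; together with the single possible partner $V_u$, the total is at most $\deg(u)+1\le\Delta(G)+1$. This is exactly the structure of the proof of Theorem~4 in \cite{4}, transplanted from domination to total domination, and the only delicate point is confirming that ``$u$ has no neighbor in $X$'' correctly replaces the role of ``$u\notin X$ and $u$ has no neighbor in $X$'' used in the domination setting — which it does, since $u$ may well lie in $X$ yet still fail to be dominated by $X$ in the open-neighborhood sense.
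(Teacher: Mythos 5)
Your proposal is correct and follows essentially the same approach as the paper: fix a vertex $u$ not totally dominated by $X$ and observe that every total coalition partner of $X$ must contain a neighbor of $u$. In fact your first paragraph already finishes the proof with the stronger bound $\Delta(G)$, because in the total domination setting even the block containing $u$ must meet the \emph{open} neighborhood $N(u)$ if it is a partner (as $u\in V_u$ does not totally dominate $u$), so the subsequent bookkeeping about a possible extra partner $V_u$ is unnecessary; the paper's stated $+1$ comes only from its use of $N[x]$ in place of $N(x)$.
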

 \begin{proof}
Since $X\in\pi$,  $X$ is not a total dominating set. So, there is a vertex $w$ in $G$ with no neighbor in $X$. Therefore, $X\cap N(w)=\emptyset$. Now, in the following, we consider two cases: $w\in X$ and $w\not\in X$.

 Suppose first that $w\in X$. Now, if a set $A\in \pi$ forms a total coalition with $X$, then  $A\cup X$ is a total dominating set of $G$ and  since $X\cap N(w)=\emptyset$, we should have $A\cap N(w)\neq\emptyset$. Therefore, there are at most $|N(w)|$ sets such as $A$ forming a total coalition with $X$, and consequently, $X$ is in at most $\Delta(G)$ total coalition. 

Now, suppose that $w\not\in X$. So, each set of $\pi$ which is  in total coalition with $X$ must contain at least one of the members of $N[w]$. We claim that there is no set $U\in \pi$ that forms a total coalition with  $X$ and  $U\cap N[w]=\{w\}$. Suppose on
the contrary that there is a set $U\in \pi$ that forms a total coalition with $X$ and  $U\cap N[w]=\{w\}$. So $X\cup U$ is a total dominating set. Then, $w$ has a neighbor in $X\cup U$, which is a contradiction, because $X\cap N(w)=\emptyset$ and $U\cap N(w)=\emptyset$. This proves the claim. Therefore, among all sets of $\pi$  that form a total coalition with $X$  there is exactly one set $U$  with $w\in U$ and $U\cap N(w)\neq \emptyset$.  Hence, there are at most $|N(w)|$ sets of $\pi$ that form a total coalition with $X$. Therefore, $X$ is in total coalition with at most $\Delta(G)$ sets of $\pi$.\qed
 \end{proof}

 \section{Upper bounds and exact values with $\delta(G)=1$}  

 In this section, we present some  upper bounds and exact values for   the total coalition number  of graphs $G$ with $\delta(G)=1$. We first prove the following lemma. 
\begin{lemma}
\label{lem:delta1}
Let $G$ be a graph with $\delta(G)=1$, and let  $x$ be  a leaf of $G$ and $y$ be the support vertex of $x$. Let  $\pi$ be a $TC(G)$-partition, and let $X, Y\in \pi$ such that $x\in X$ and $y\in Y$ (possibly $X=Y$). For any  two sets $A,B\in \pi$ that form a total coalition, we have $A\in \{X, Y\}$ or $B\in\{X,Y\}$. 
\end{lemma}
\begin{proof}
Since $A$ and $B$ form a total coalition,  $A\cup B$ is a total dominating set of $G$. If $A\not\in \{X,Y\}$ and $B\not\in \{X,Y\}$, then the vertex $x$ has no neighbor in $A\cup B$, which is a contradiction. Therefore,  $A\in \{X, Y\}$ or $B\in\{X,Y\}$. \qed
\end{proof}

Now, we prove the following theorem.
\begin{theorem}
\label{thm:lessthann}
If $G$ is a graph of order $n$ with $\delta(G)=1$ and with no full vertex, then $TC(G)<n$.
\end{theorem}
\begin{proof}
Let $x$ and $y$ be two vertices of $G$ with $deg(x)=1$ and $x\in N(y)$. Let $\pi$ be a $TC(G)$-partition.  Suppose on
the contrary that  $TC(G)=n$. So, $\{x\}\in \pi$ and $\{y\}\in \pi$. Suppose that  $\{x\}$ and $\{y\}$  do not form a total coalition.    Since $y$ is the only neighbor of~$x$,   there is no set $\{a\}\in \pi$ with $a\neq y$ forming a total coalition with $\{x\}$. Therefore, $\{x\}$ is not in total coalition with any set of $\pi$, which is a contradiction. So, $\{x\}$ and $\{y\}$   form a total coalition and so,  $\{x,y\}$ is a total dominating set of $G$. Therefore, $y$ is a full vertex, which is a contradiction. Hence,  $TC(G)<n$. \qed
\end{proof}

By Theorem \ref{thm:lessthann}, we have the following result. 
\begin{corollary}
\label{cor:lessthann}
For any tree $T$ of order $n$ with no full vertex, $TC(T)<n$.
\end{corollary}
Note that if a graph $G$ contains a full vertex, by Proposition \ref{proptcgn}, $TC(G)=n$. 

In the following, we show that for any tree $T$ of order $n$ with no full vertex, it holds that $TC(T)<n-1$.
\begin{theorem}
\label{thmn_1}
If $T$ is a tree of order $n$ with no full vertex, then $TC(T)<n-1$.
\end{theorem}
\begin{proof}
By Corollary \ref{cor:lessthann}, $TC(T)\leq n-1$. It suffices  to prove that $TC(T)\neq n-1$. Suppose on the contrary that $TC(T)=n-1$. Let $\pi$ be a $TC(T)$-partition. So, the partition $\pi$ contains a set $A$ of cardinality two, and the rest of the members  of $\pi$ are single-element. Let $x$ be a leaf of $T$ and $y$ be the support vertex of $x$.
Now, we prove  the following claims.
\begin{itemize}
\item{\bf Claim 1.} $A\cap\{x,y\}\neq \emptyset$. Suppose on
the contrary that $A\cap\{x,y\}=\emptyset$. So, $\{x\}\in\pi$ and $\{y\}\in\pi$.  Since $y$ is the only neighbor of $x$, $\{x\}$  forms a total coalition  only with  $\{y\}$. Therefore, by  Lemma \ref{lem:delta1}, all sets of $\pi$ form a total coalition with $\{y\}$ and so $y$ is a full vertex, which is a contradiction.  Therefore,  $A\cap\{x,y\}\neq \emptyset$.
\item{\bf Claim 2.}  $A\neq \{x,y\}$. Suppose on
the contrary that $A=\{x,y\}$. By Lemma~\ref{lem:delta1}, all sets of $\pi$ form a total coalition with $A$. Since the only neighbor of $x$ is the vertex $y$, so $y$ is adjacent to all vertices of $T$ and therefore, $y$ is a full vertex, which is a contradiction.  Therefore, $A\neq \{x,y\}$.
\item {\bf Claim 3.} $x\not\in A$. Suppose on
the contrary that $x\in A$. By Claims 1 and 2, there is a vertex $z\neq y$ in $T$ such that $A=\{x,z\}$, and so $\{y\}\in \pi$.  Since $y$ is the only neighbor of $x$, the set $A$ forms a total coalition only with $\{y\}$, so  $\{x,y,z\}$ is a total dominating set of $T$. Therefore, $y$ is adjacent to $z$. Since $y$ is the only neighbor of $x$,  by  Lemma \ref{lem:delta1}, all sets of $\pi$ form a total coalition with $\{y\}$. Therefore, $y$ is a full vertex, which is a contradiction. Hence, $x\not\in A$. 
\end{itemize}
By Claims 1, 2,  and 3, we should have $y\in A$, and therefore $\{x\}\in \pi$.  Let $u$ be a vertex of $T$ such that $A=\{y,u\}$.  Since $y$ is the only neighbor of $x$, the set $\{x\}$ forms a total coalition only with $A$. So $\{x,y,u\}$ is a total dominating set of $T$. Therefore, $u\in N(y)$. Since $A$ is not a total dominating set, there is a vertex $w$ with $w\neq y$ and $w\neq u$ which is not adjacent to $y$ and $u$.  Moreover,  we have $\{w\}\in \pi$.  By  Lemma \ref{lem:delta1}, $\{w\}$ forms a total coalition  only with $A$. Therefore,   $\{w,y,u\}$ is a total dominating set, and so $w$ must be adjacent to $y$ or $u$, which is a contradiction. So $TC(T)\neq n-1$ and therefore $TC(T)<n-1$. \qed
\end{proof}

Now, we prove the following theorem.
\begin{theorem}
\label{thm:deltaG1}
For any graph $G$ with $\delta(G)=1$,  $TC(G)\leq \Delta(G)+1$.
\end{theorem}
\begin{proof}
Let $x$ and $y$ be two vertices of $G$ with $\deg(x)=1$ and $x\in N(y)$. Let $\pi$ be a $TC(G)$-partition. Suppose first that $W$ is a member of $\pi$ such that contains both  $x$ and $y$. By Lemma \ref{lem:delta1}, for any two sets $A,B\in \pi$ that form a total coalition, $A=W$ or $B=W$. Therefore, all sets of $\pi\setminus\{W\}$ form a total coalition with $W$. By Theorem~\ref{tatmost}, $W$ is in total coalition with at most $\Delta(G)$ sets of $\pi$. Hence, $TC(G)\leq \Delta(G)+1$.

We now suppose that $X$ and $Y$ are two  sets of $\pi$ with $X\neq Y$ such that $x\in X$ and $y\in Y$. By Lemma \ref{lem:delta1}, for any two sets $A,B\in \pi$ that form a total coalition, $A\in\{X,Y\}$ or $B\in\{X,Y\}$.  Since the only neighbor of $x$ is the vertex $y$,  so the set $X$ must be  in total coalition only with the set $Y$.  Hence, all sets of $\pi\setminus\{Y\}$ form a total coalition with $Y$. By Theorem~\ref{tatmost}, $Y$ is in total coalition with at most $\Delta(G)$ sets of~$\pi$. Hence, $TC(G)\leq \Delta(G)+1$.\qed
\end{proof}

By Theorem \ref{thm:deltaG1}, we have the following result. 
\begin{corollary}
\label{col:TDelta}
For any tree $T$, we have $TC(T)\leq \Delta(T)+1$. 
\end{corollary}
In the following, we determine the total coalition number of paths.

\begin{lemma} 
\label{path}
For any path $P_n$ of order $n>1$, it holds that
$$TC(P_n)=\left\{\begin{array}{cc}
2& {\rm if~} n=2 {\rm ~or~} n=4\\
3& {\rm otherwise.}
\end{array}\right.$$
\end{lemma}  
\begin{proof}
For the path $P_2$, it is clear that $TC(P_2)=2$. Now,  by Theorem \ref{thmn_1}, $TC(P_4)\leq 2$. It is not hard to see that $TC(P_4)=2$. Now, we suppose that $n\neq 2$ and $n\neq 4$.  Let $V=\{1,2,\ldots,n\}$ be the vertex set of $P_n$ such that  $\{i,i+1\}$  with $1\leq i<n$ are  the edges of $P_n$. By Corollary \ref{col:TDelta},  we have $TC(P_n)\leq 3$.  Let  $\pi=\left\{\{1\}, \{3\}\right\}\cup \left\{V\backslash\{1,3\}\right\}$. It is clear that $\pi$ is a $TC(P_n)$-partition. Since $|\pi|=3$ and $TC(P_n)\leq 3$, it holds that $TC(P_n)= 3$.\qed
\end{proof}

Based on Lemma \ref{path}, $TC(P_5)=3$ and so  the bound $\Delta(G)+1$ in Theorem \ref{thm:deltaG1} is sharp.

Now, we prove the following theorem.
\begin{theorem}
\label{thmTCG2}
For any isolate-free graph $G$ whose each vertex is adjacent to at least one vertex of degree one, it holds that $TC(G)=2$.
\end{theorem}
\begin{proof}
Since $\delta(G)=1$, so by Theorem  \ref{2.9}, we have $TC(G)\geq 2$. It suffices to prove that $TC(G)\leq 2$.  Suppose on
the contrary that  $TC(G)\geq 3$. Let $\pi$ be a $TC(G)$-partition.   Consider two vertices $v_1$ and $l_1$ such that $l_1$ is a leaf and $l_1\in N(v_1)$.   Now, we consider two cases. 
\begin{itemize}
\item There is a set $U\in \pi$ such that  $\{v_1,l_1\}\subseteq U$.  Since $TC(G)\geq 3$, there are two distinct sets $A,B\in \pi$  with   $A\neq U$ and $B\neq U$.  By Lemma \ref{lem:delta1}, all sets of $\pi\backslash \{U\}$ form a total coalition with $U$ and  any pair of sets of $\pi\backslash \{U\}$ do not form a total coalition. So  both sets $A$ and $B$ form a total coalition with $U$, and $A$ and $B$ do not form a total coalition. We claim that for every vertex $v$ of $G$   with $deg(v)>1$, $v\in U$. Suppose on
the contrary that there is a vertex $v'$ with $\deg(v')>1$ such that $v'\not \in U$. We may assume that $v'\in A$. Now,  let $l'$ be a leaf adjacent to~$v'$. If $l'\in A$, by Lemma \ref{lem:delta1}, any set of $\pi\setminus A$ should  form a total coalition with $A$. Then, $B$ and $A$  form a total coalition which is a contradiction. So,  $l'\not\in A$. Let $X\in \pi$ with $X\neq A$ such that $l'\in X$. Since the only neighbor of $l'$ is $v'$, $X$ must form  a total coalition only with $A$. Now, if $X= U$, then   $X$ forms a total coalition with all sets of $\pi\backslash\{U\}$, which is a contradiction.  But if $X\neq U$,  since any pair of sets of $\pi\backslash \{U\}$ do not form a total coalition,  $X$ does not form a total coalition with $A$, which is a contradiction.  Therefore, $v'\in U$. This proves the claim. Since $U$ contains all vertices $v$ with $deg(v)>1$, $U$ is a total dominating set of $G$, which is a contradiction. Hence, $TC(G)\leq 2$, and  since $TC(G)\geq 2$, we have  $TC(G)=2$.
\item  There is no set  $U\in \pi$  such that  $\{v_1,l_1\}\subseteq U$. Let $C,D\in \pi$ with $C\neq D$ such that $v_1\in C$ and $l_1\in D$.  Since the only neighbor of $l_1$ is the vertex $v_1$, by Lemma~\ref{lem:delta1}, all sets of $\pi\backslash \{C\}$ form a total coalition only with $C$,  and  any pair of sets of $\pi\backslash \{C\}$ do not form a total coalition.  We claim that for every vertex $v$ with $deg(v)>1$, $v\in C$. From here, by the identical arguments to  the previous case,  the claim is proved.  It is enough to assume that $C$ is the same as $U$.  Therefore,  the claim is proved. Since $C$ contains all vertices $v$ with $deg(v)>1$,  $C$ is a total dominating set of $G$, which is a contradiction. Hence, $TC(G)\leq 2$, and  since $TC(G)\geq 2$, we have  $TC(G)=2$.
\end{itemize}
This completes the proof. \qed
\end{proof}

The corona product of two graphs $F$ and $H$,  denoted by $F\circ H$,  is defined as the graph obtained by taking one copy of $F$ and $|V(F)|$ copies of $H$ and joining the $i$-th vertex of $F$ to every vertex in the $i$-th copy of $H$.  By Theorem \ref{thmTCG2}, we easily conclude the following result.
\begin{corollary}
For any graph $G$,  $TC(G\circ\overline{K_n})=2$.
\end{corollary}

\section{Upper bounds and exact values with $\delta(G)=2$}
Here, we prove that for any graph $G$ with $\delta(G)=2$,  $TC(G)\leq 2\Delta(G)$. Then, we determine the exact  value of the total coalition number of cycles.  We start with the following lemma whose proof is similar  to the proof of Lemma \ref{lem:delta1}.  
\begin{lemma}
\label{lem:delta2}
Let $G$ be a graph with $\delta(G)=2$, and let  $x$ be  a vertex of $G$ with $deg(x)=2 $, and suppose that $N(x)=\{y,z\}$, where $y$ and $z$ are vertices of $G$. Let  $\pi$ be a $TC(G)$-partition, and let $X, Y, Z\in \pi$ such that $x\in X, y\in Y$ and $z\in Z$ (possibly $X=Y$ or $Y=Z$ or $X=Z$). For any  two sets $A,B\in \pi$ that form a total coalition, we have $A\in \{X, Y, Z\}$ or $B\in\{X,Y, Z\}$. 
\end{lemma}
Now, we prove the following theorem.
\begin{theorem}
\label{delta2}
For any graph  $G$ with $\delta(G)=2$, $TC(G)\leq 2\Delta(G)$.
\end{theorem}
\begin{proof}
Let $x$ be a vertex of $G$ with $\deg(x)=2$, and suppose that $N(x)=\{y,z\}$, where $y$ and $z$ are vertices of $G$. Let $\pi$ be a $TC(G)$-partition. Now, we consider the following cases.
\begin{itemize}
\item{\bf Case 1.} There is a set $U\in \pi$ such that $\{x,y,z\}\subseteq U$. By Lemma \ref{lem:delta2}, all sets of $\pi\backslash U$ form a total coalition with $U$. So, by Theorem \ref{tatmost}, $U$ is in total coalition with at most $\Delta(G)+1$ sets. Therefore, $TC(G)\leq \Delta(G)+1+1\leq 2\Delta(G)$. 
\item{\bf Case 2.} There are two distinct sets $X, A\in \pi$  such that $x\in X$ and $\{y,z\}\subseteq A$.  Since $N(x)\subseteq A$, there is no set $B\in \pi$ with $B\neq A$  that  forms a total coalition with~$X$. So $X$ forms a total coalition  only with $A$. Moreover, by Lemma \ref{lem:delta2}, all sets of $\pi\backslash \{A\}$ form a total coalition with  $A$.  Therefore, by Theorem \ref{tatmost}, $A$ is in total coalition with at most $\Delta(G)$ sets. So, $TC(G)\leq \Delta(G) +1\leq 2\Delta(G) $.
\item{\bf Case 3.} There are two  distinct sets $Y, B\in \pi$  such that $y\in Y$ and $\{x,z\}\subseteq B$. By Lemma~\ref{lem:delta2}, all sets of $\pi\backslash\{Y,B\}$  form a total coalition with $Y$ or $B$. Now, suppose that $Y$ and $B$ form a total coalition.  By Theorem \ref{tatmost}, each set  of $Y$ and $B$ is in at most $\Delta(G)$ total coalition. Therefore, $TC(G)\leq \Delta(G)-1+\Delta(G)-1+1+1=2\Delta(G)$. Now, suppose that $Y$ and $B$ do not form a total coalition. Since $Y\cup B$ is not  a total dominating set,  there is a vertex $w$ in $G$ that has no neighbor in $Y\cup B$. Since  all sets of $\pi\backslash\{Y,B\}$  form a total coalition with $Y$ or $B$, for totally dominating the vertex $w$, all sets  of $\pi\backslash\{Y,B\}$ must contain at least one members of $N(w)$. Hence, $TC(G)\leq |N(w)|+2\leq \Delta(G)+2\leq 2\Delta(G)$.
\item {\bf Case 4.} There are two distinct sets $Z, C\in \pi$  such that $z\in Z$ and $\{x,y\}\subseteq C$.
The proof is similar to the proof of  {\bf Case 3}.
\item {\bf Case 5.} There are three distinct sets $X, Y, Z\in \pi$ such that $x\in X, y\in Y$ and $z\in Z$.
Since the neighbors of  the vertex $x$ are in $Y$ and $Z$,  the set $X$ forms a total coalition only with $Y$ or $Z$. By Lemma~\ref{lem:delta2}, all sets of $\pi\backslash\{Y, Z\}$  form a total coalition with $Y$ or $Z$. Suppose that $Y$ and $Z$ do not form a total coalition. So $Y\cup Z$ is not a total dominating set. Therefore, there is a vertex $w$ with no neighbor in $Y\cup Z$. Since all sets of $\pi\backslash\{Y, Z\}$  form a total coalition with $Y$ or $Z$, for totally dominating the vertex $w$, all sets of $\pi\backslash\{Y, Z\}$  must contain at least one of the members of $N(w)$.  Now, since  $X$ forms a total coalition with at least  one the sets $Y$ and $Z$,   $TC(G)\leq |N(w)|-1+3\leq \Delta(G)+2\leq 2\Delta(G)$. 

Now,  suppose that $Y$ and $Z$ form a total coalition. Suppose first  that $X$ forms a total coalition with exactly one of the sets $Y$ and $Z$. Suppose, without loss of generality, that $X$ forms a total coalition  with  $Z$, and does  not form a total coalition with $Y$.   By Theorem \ref{tatmost}, each set  of $Y$ and $Z$ is in at most $\Delta(G)$ total coalition. So $TC(G)\leq \Delta(G)-1+\Delta(G)-2+1+1+1=2\Delta(G)$. Now, if $X$ forms a total coalition with both  sets $Y$ and $Z$, then by Theorem \ref{tatmost}, we obtain that $TC(G)\leq \Delta(G)-2+\Delta(G)-2+1+1+1=2\Delta(G)-1\leq 2\Delta(G)$.\qed
\end{itemize}
\end{proof}
Now, we prove the following result.
\begin{theorem}\label{cycle}
For any cycle $C_n$, 
$$ TC(C_{n})=\left\{
 \begin{array}{cc}
 4  &\quad n\equiv 0 ~(\mbox{mod } 4)\\
 3   &\quad\mbox{otherwise. }
 \end{array}\right.
 $$
\end{theorem}  
\begin{proof}
By Theorem \ref{delta2}, we have $TC(C_n)\leq 4$. Let $V=\{1,2,\ldots, n\}$ be the vertices of $C_n$, and $\{i,i+1\}$ with $1\leq i<n$ and $\{n,1\}$ are the edges of $C_n$. Assume that $n\equiv 0~(\mbox{mod } 4)$. A $TC(C_n)$-partition $\pi=\{A, B, C, D\}$  is constructed as follows. 

Suppose that $\frac{n}{4}$ is an even number. So,
$$A=\bigcup_{k=0}^{\frac{n-8}{8}}\{4k+1, 4k+2\},~B=\bigcup_{k=\frac{n}{8}}^{\frac{n-4}{4}}\{4k+1, 4k+2\},$$
$$C=\bigcup_{k=0}^{\frac{n-8}{8}}\{4k+3, 4k+4\},~D=\bigcup_{k=\frac{n}{8}}^{\frac{n-4}{4}}\{4k+3, 4k+4\},$$
Now, suppose that $\frac{n}{4}$ is an odd number. So,
$$A=\left(\bigcup_{k=0}^{\left \lfloor\frac{n-8}{8}\right\rfloor}\{4k+1, 4k+2\}\right)\bigcup\left\{4\left \lceil\frac{n-8}{8}\right\rceil+1\right\},$$
$$B=\left(\bigcup_{k=\left\lceil\frac{n}{8}\right\rceil}^{\frac{n-4}{4}}\{4k+1, 4k+2\}\right)\bigcup\left\{4\left \lceil\frac{n-8}{8}\right\rceil+2\right\},$$
$$C=\left(\bigcup_{k=0}^{\left \lfloor\frac{n-8}{8}\right\rfloor}\{4k+3, 4k+4\}\right)\bigcup\left\{4\left \lceil\frac{n-8}{8}\right\rceil+3\right\},$$
$$D=\left(\bigcup_{k=\left\lceil\frac{n}{8}\right\rceil}^{\frac{n-4}{4}}\{4k+3, 4k+4\}\right)\bigcup\left\{4\left \lceil\frac{n-8}{8}\right\rceil+4\right\}.$$
It is not hard to verify that $A$ and $B$ form a total coalition,  and $C$ and $D$ form a total coalition.

Now, suppose that $n$ is not divisible by 4. We show that $TC(C_n)\neq 4$. Suppose on
the contrary that $TC(C_n)=4$. Let $\pi=\{A, B, C, D\}$ be the $TC(C_n)$-partition.  
By Theorem \ref{tatmost}, each set of $\pi$ is in total coalition with  at most two sets of $\pi$.
So, we  assume, without loss of generality, that $A$ and $B$ form a total coalition, and $C$ and $D$ form a total coalition. It is not hard to see that for any  integer $n>0$ which is not divisible by 4, it holds that $\gamma_t(C_n)=\lfloor\frac{n}{2}\rfloor+1$.  Hence, since $A\cup B$ and $C\cup D$ are the total dominating sets of $C_n$, it holds that $|A|+|B|\geq \lfloor\frac{n}{2}\rfloor+1 $ and $|C|+|D|\geq \lfloor\frac{n}{2}\rfloor+1 $. Therefore, $|A|+|B|+|C|+|D|\geq 2\lfloor\frac{n}{2}\rfloor+2 $. On the other hand, since $\pi$ is a vertex partition of $C_n$,   $|A|+|B|+|C|+|D|=n$. Then, we have $n\geq 2\lfloor\frac{n}{2}\rfloor+2$, which is a contradiction. Hence, $TC(C_n)\neq 4$. Then, $TC(C_n)\leq 3$. In the following, we present a $TC(C_n)$-partition $\pi$ such that $|\pi|=3$. To do this, let  $\pi$ = $\{A=\{1,2\}, B=\{3,4\}, C=V(C_n)\setminus(A\cup B)\}$. Observe that each of $A$ and $B$ form a total coalition with $C$. \qed
\end{proof}

\begin{remark}
 Let $n$ be any positive integer and $F_n$ be the Friendship graph with $2n+1$ vertices and $3n$ edges, formed by the join of $K_1$
 with $nK_2$. Since $F_n$ has a full vertex,  $TC(F_n)=2n+1$ and  $|2\Delta(F_n)-TC(F_n)|=|2n-1|$. This shows that there is a graph $G$ such that the value
 of $|2\Delta(G)-TC(G)|$ can be 
 arbitrarily large.  
\end{remark}

\section{Conclusion}
 In this paper, we have introduced the total coalition concept in graphs and we have studied some properties for the total coalition number. We proved that for any graph $G$ with $\delta(G)=1$, $TC(G)\leq \Delta(G)+1$,  and for any graph~ $G$ with $\delta(G)=2$, $TC(G)\leq 2\Delta(G).$ Using these bounds, we obtained the exact values of $TC(P_n)$, $TC(G\circ \overline{K_n})$  and $TC(C_n)$. There are many open problems and areas in the study of the total coalition number  of a graph that we  state and close the paper with some of them. 
 \begin{enumerate}
 \item What is the total coalition number of  graph operations, such as corona, Cartesian, join, lexicographic, and so on?

 \item What is the total coalition number of  natural and fractional powers of a graph (see e.g. \cite{BIMS})?

\item What is the effects on $TC(G)$ when $G$ is modified by operations on vertex and edge of $G$?
 \item Similar to the coalition graph of $G$, it is natural to define and study the total coalition graph of $G$ for total coalition partition $\pi$, which can be denoted by $TCG(G,\pi)$, and is defined as follows. Corresponding to any total coalition partition $\pi=\{V_1,V_2,\ldots, V_k\}$ in  a graph $G$, a  {\em total coalition graph} $TCG(G, \pi)$ is  associated in which there is a one-to-one correspondence between the   vertices of $TCG(G,\pi)$  and the sets $V_1, V_2,...,V_k$ of $\pi$, 
and two vertices of  $TCG(G,\pi)$  are adjacent if and only if their corresponding
sets in $\pi$ form a total coalition. 
 
\item Study the complexity of the total coalition number for many of the graphs.  

 \end{enumerate} 
\medskip

\nt{\bf Acknowledgement.} 
The work of Hamidreza Golmohammadi is supported by the Mathematical Center in Akademgorodok, under agreement No. 075-15-2022-282 with the Ministry of Science and High Education of the Russian Federation.


\begin{thebibliography}{99}
\bibitem{BIMS} S. Alikhani, S. Soltani, {Distinguishing number and distinguishing index of natural and fractional powers of graphs}, Bull. Iranian Math. Soc.  43 (7) (2017) 2471-2482. 
	
\bibitem{Davood} D. Bakhshesh, M.A. Henning, D. Pradhan, On the coalition number of trees, available at \texttt{https://arxiv.org/abs/2111.08945}. 
	
	\bibitem{2} E.J. Cockayne, S. T. Hedetniemi, Towards a theory of domination in graphs. Networks,
	7:247-261, 1977.
	
	\bibitem{3}  E.J. Cockayne, R.M. Dawes, S.T. Hedetniemi, Total domination in graphs. Networks 10,
	211–219 (1980).
	
	
	\bibitem{4} T.W. Haynes, J.T. Hedetniemi, S.T. Hedetniemi, A.A. McRae and R. Mohan,
	Introduction to coalitions in graphs, AKCE Int. J. Graphs Combin. 17 (2) (2020),
	653–659.
\bibitem{401}T.W. Haynes, J.T. Hedetniemi, S.T. Hedetniemi, A.A. McRae and R. Mohan,   Coalition Graphs, Commun. Comb. Optim,  (to appear).	
	\bibitem{5} T.W. Haynes, J.T. Hedetniemi, S.T. Hedetniemi, A.A. McRae and R. Mohan,
	Coalition graphs of paths, cycles and trees, Discuss. Math. Graph Theory. (to
	appear).
	
	\bibitem{6} T.W. Haynes, J.T. Hedetniemi, S.T. Hedetniemi, A.A. McRae and R. Mohan, Upper bounds on the coalition number, Austral. J. Combin. 80 (3) (2021),
	442–453.
	
	\bibitem{7} T.W. Haynes, S.T. Hedetniemi, P.J. Slater, Fundamentals of Domination in Graphs, in: Chapman and Hall/CRC Pure and Applied Mathematics Series,
	Marcel Dekker, Inc. New York, 1998.
	
	\bibitem{8} T.W. Haynes, S.T. Hedetniemi, P.J. Slater,  Domination in Graphs, Advanced Topics, Marcel Dekker,
	Inc., New York, 1998.
	
	\bibitem{9} M.A. Henning and A. Yeo, Total domination in graphs, Springer Monographs
	in Mathematics, 2013.
	
	
	\bibitem{11} B. Zelinka,  Domination in the generalized Petersen graphs, Czechoslovak Mathematical Journal, 52 (127) (2002), 11–16.
	
	\bibitem{12} B. Zelinka, Domatic number and degrees of vertices of a graph.
	Math. Slovaca 33 (1983): 145–147.
	
	\bibitem{13} B. Zelinka, On domatic numbers of graphs. Math. Slovaca 31
	(1981),  91–95.
	
	\bibitem{14} B. Zelinka,  Total domatic number and degrees of vertices of a graph, Math. Slovaca, 39 (1) (1989), 7-11.
	
	
\end{thebibliography}
\end{document}